\documentclass{amsart}

\usepackage{amsmath,amsthm,amssymb,mathtools,amscd,color,comment}
\usepackage{cleveref}
\usepackage{tikz}
\usetikzlibrary{intersections, calc, arrows.meta}
\usepackage[all]{xy}
\usepackage{enumerate}
\usepackage{url}
\usepackage{setspace}
\setstretch{1.2}
\allowdisplaybreaks

\setlength{\topmargin}{-60pt}
\setlength{\headheight}{12truept}
\setlength{\headsep}{25pt}
\setlength{\footskip}{37pt}
\setlength{\hoffset}{10mm}
\setlength{\voffset}{39pt}
\setlength{\oddsidemargin}{-7mm}
\setlength{\evensidemargin}{-7mm}
\setlength{\textheight}{217mm}
\setlength{\textwidth}{156mm}

\newtheorem{thm}{Theorem}[section]
\newtheorem{prop}[thm]{Proposition}
\newtheorem{lem}[thm]{Lemma}
\newtheorem{cor}[thm]{Corollary}
\newtheorem{rem}[thm]{Remark}
\newtheorem{conj}[thm]{Conjecture}
\newtheorem{prob}[thm]{Problem}
\theoremstyle{definition}

\newtheorem*{thmA}{Theorem A}
\newtheorem*{thmB}{Theorem B}

\DeclareMathOperator{\Harm}{Harm}

\DeclareMathOperator{\Hst}{Hst}

\newcommand{\RR}{\mathbb{R}}
\newcommand{\CC}{\mathbb{C}}
\newcommand{\NN}{\mathbb{N}}


\title{Spherical Designs on $S^1$
 of Finite Harmonic Strength}

\author[R. Misawa]{Ryutaro Misawa}
\address[R.~Misawa]{Division of Mathematics\\
Graduate School of Information Sciences\\
Tohoku University\\
6-3-09 Aramaki-Aza-Aoba, Aoba-ku, Sendai 980-8579\\
Japan}
\email{misawa.ryutaro.q2@dc.tohoku.ac.jp}
\thanks{Corresponding author: Ryutaro Misawa
(\texttt{misawa.ryutaro.q2@dc.tohoku.ac.jp}).}

\author[Y. Nishimura]{Yusaku Nishimura}
\address[Y. Nishimura]{Graduate School of Fundamental Science and Engineering,
Waseda University, Tokyo 169-8555, Japan}
\email{n2357y@ruri.waseda.jp}

\subjclass[2020]{Primary 05B30; Secondary 42C10, 52C17, 65D32.}

\keywords{spherical designs, harmonic strength, harmonic index designs, unit circle}

\date{\today}

\begin{document}

\begin{abstract}
We study exact harmonic strengths of finite spherical designs on the unit circle.  For a nonempty finite set \(X\subset S^1\), let \(\Hst(X)\) be the set of positive integers \(k\) for which the \(k\)-th complex moment \(P_k(X)=\sum_{x\in X}x^k\) vanishes.
Equivalently, \(X\) is a spherical \(T\)-design precisely when \(T\subset \Hst(X)\).  We consider the exact realization problem: given a finite set \(T\subset\mathbb N\), determine whether there exists a finite set \(X\subset S^1\) such that \(\Hst(X)=T\).
We prove that every finite \(T\subset\mathbb N\) is realizable. More precisely, for each \(t\ge 1\) we construct uncountably many five-point sets with \(\Hst(X)=\{t\}\), and we prove that no smaller set can have this exact harmonic strength.  A product construction then gives, for every finite \(T\subset\mathbb N\), a realization with
\(|X|=5^{|T|}\).  We also initiate the associated minimum-size problem \(N(T,2)\).  We prove \(N(\{t\},2)=5\) for all \(t\ge1\), determine
\(N(\{2,3\},2)=5\), and show that the optimal \(\{2,3\}\)-example is unique up to rotation.  Finally, we discuss a rigid seven-point example related to \(T=\{2,3,4,10\}\).

\end{abstract}

\maketitle

\section{Introduction}\label{sec:intro}

Throughout, we write \(\NN=\{1,2,3,\ldots\}\).
Spherical designs, introduced by Delsarte, Goethals, and Seidel \cite{DGS77},
are finite sets of points on the unit sphere \(S^{d-1}\subset\RR^d\) that
reproduce exact surface averages of low-degree polynomials.  Concretely, a
non-empty finite set \(X\subset S^{d-1}\) is a \emph{spherical \(t\)-design} if
\begin{equation}
   \label{eq:cubature}
  \frac{1}{|S^{d-1}|}\int_{S^{d-1}} f(\xi)\,d\xi
  =
  \frac{1}{|X|}\sum_{\xi\in X}f(\xi)
  \quad
  \bigl(\deg f\le t\bigr).
\end{equation}
When \eqref{eq:cubature} holds for every polynomial of degree at most \(t\)
but fails for some polynomial of degree \(t+1\), the integer \(t\) is called
the \emph{strength} of \(X\).

Classical work on spherical designs has been driven by the following questions:
\begin{enumerate}
    \item[(a)] For which parameters does no design exist?
    \item[(b)] For which parameters does some design exist?
    \item[(c)] How can explicit examples be constructed?
\end{enumerate}
The unit circle \(S^1\) is a useful testing ground for these questions because
of its simple trigonometric parametrisation.  Hong \cite{Hong82} classified
one-dimensional spherical \(t\)-designs into group-type and non-group type
families, and Martínez \cite{Martinez22} sharpened this classification by using
symmetric function techniques.

A natural generalisation replaces the initial segment \(\{1,\ldots,t\}\) by an
arbitrary set of degrees.  For \(T\subset\NN\), we call a finite set
\(X\subset S^{d-1}\) a \emph{spherical \(T\)-design} if
\[
  \sum_{\xi\in X}P(\xi)=0
  \quad
  \bigl(P\in\Harm_k(d),\ k\in T\bigr),
\]
where \(\Harm_k(d)\) denotes the space of real homogeneous harmonic polynomials
of degree \(k\) in \(d\) variables.  The \emph{harmonic strength} of \(X\) is
\[
  \Hst(X):=
  \left\{
  k\in\NN
  \,\middle|\,
  \sum_{\xi\in X}P(\xi)=0
  \text{ for all }P\in\Harm_k(d)
  \right\}.
\]

The exact condition considered in this paper is different from the usual
strength condition.  The usual spherical \(t\)-design condition concerns the
initial segment \(\{1,\ldots,t\}\), whereas \(\Hst(X)\) records the full set of
degrees for which all harmonic moments vanish.  Thus the harmonic strength of
\(X\) need not be an initial segment, and it is not determined by the ordinary
strength alone.  Consequently, even on \(S^1\), the problem of prescribing
\(\Hst(X)\) exactly is not a formal consequence of the existing classification
of spherical \(t\)-designs.

In this paper, we focus on the exact realisation problem
\[
  \Hst(X)=T.
\]
For a given finite set \(T\subset\NN\), the following questions naturally arise:
\begin{enumerate}\setlength{\itemsep}{0pt}
    \item[(a')] For which \(T\) does no finite set \(X\) with \(\Hst(X)=T\) exist?
    \item[(b')] Does such an \(X\) exist for every finite set \(T\subset\NN\)?
    \item[(c')] If such an \(X\) exists, can one construct it explicitly?
\end{enumerate}
These questions are already nontrivial in dimension \(d=2\).  Miezaki
\cite{Miezaki13} determined the harmonic strength for the shells of the lattice
\(\mathbb Z^2\).  Bannai et al.\ \cite{ZBBKY17} studied non-existence results
for spherical designs of prescribed harmonic indices in higher dimensions.  On
\(S^1\), however, the exact realisation problem \(\Hst(X)=T\) for arbitrary
finite \(T\subset\NN\) had not been settled in this generality.

Our first result treats the case of a singleton \(T=\{t\}\).

\begin{thmA}\label{thm:A}
For every integer \(t\ge1\), there exist uncountably many \(5\)-point sets
\(X\subset S^1\) such that \(\Hst(X)=\{t\}\) and \(1\in X\).  No subset of
\(S^1\) with fewer points has this property.
\end{thmA}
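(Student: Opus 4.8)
The plan is to recast everything in terms of the trigonometric moments of the configuration. Writing a configuration as $X=\{e^{i\theta_1},\dots,e^{i\theta_n}\}\subset S^1$ and setting $M_k=\sum_{j=1}^n e^{ik\theta_j}$, the space $\Harm_k(2)$ is spanned by $\Re(z^k)=\cos k\theta$ and $\Im(z^k)=\sin k\theta$, so that
\[
 k\in\Hst(X)\iff M_k=0 .
\]
The entire statement thus becomes a question about the vanishing pattern of the single sequence $(M_k)_{k\ge1}$, and the normalisation $1\in X$ just means one $\theta_j$ equals $0$.

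For the minimality claim I would rule out $n\le4$ by showing that $M_t=0$ always forces a second vanishing $M_k=0$ with $k\ne t$, so that $\Hst(X)\supsetneq\{t\}$; since any proper subset of the five-point set below has at most four points, this also gives ``no subset with fewer points.'' Here $M_t=0$ says that the $n$ unit vectors $e^{it\theta_j}$ sum to $0$. For $n=1$ this is impossible, so $t\notin\Hst$. For $n=2$, $e^{it\theta_1}=-e^{it\theta_2}$ gives $e^{3it\theta_1}=-e^{3it\theta_2}$, whence $M_{3t}=0$. For $n=3$, three unit vectors with zero sum are mutually at $120^\circ$, so $\{e^{it\theta_j}\}=\{w,w\omega,w\omega^2\}$ with $\omega=e^{2\pi i/3}$, and then $M_{2t}=w^2(1+\omega+\omega^2)=0$. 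For $n=4$ I would first prove the elementary fact that four unit vectors with zero sum must form two antipodal pairs, say $e^{it\theta_1}=-e^{it\theta_3}$ and $e^{it\theta_2}=-e^{it\theta_4}$; cubing then gives $M_{3t}=0$. In all cases $2t,3t\ne t$, so $n\ge5$ is forced.

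For existence I would analyse the reflection-symmetric five-point family
\[
 X_{\alpha,\beta}=\{1,\,e^{\pm i\alpha},\,e^{\pm i\beta}\},\qquad 0<\alpha<\beta<\pi ,
\]
whose five points are automatically distinct and contain $1$. Symmetry makes every moment real,
\[
 M_k=1+2\cos k\alpha+2\cos k\beta ,
\]
so each condition $k\in\Hst(X_{\alpha,\beta})$ reduces to the single real equation $M_k=0$. The design locus $C=\{(\alpha,\beta):M_t=0\}$ is nonempty and, away from a discrete bad set, a smooth real-analytic curve, hence uncountable. Each $M_k$ is real-analytic, so by the identity theorem, \emph{provided} $M_k$ is not identically zero on $C$, the set $\{M_k=0\}\cap C$ is discrete and thus countable. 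Granting this for every $k\ge1$ with $k\ne t$, the set $C\smallsetminus\bigcup_{k\ge1,\,k\ne t}\{M_k=0\}$ is $C$ minus a countable set, still uncountable; each of its points yields a five-point design with $\Hst=\{t\}$ and $1\in X$, and distinct $(\alpha,\beta)$ with $\alpha<\beta$ give distinct $X$.

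The main obstacle is precisely the hypothesis just invoked: for each $k\ne t$ one must prove $M_k\not\equiv0$ on $C$ (equivalently, that $M_k$ is non-constant on $C$, since $M_t\equiv0$ there). Substituting $x=\cos\alpha$, $y=\cos\beta$ and the Chebyshev polynomials $T_j$ turns $M_j$ into the polynomial $P_j(x,y)=1+2T_j(x)+2T_j(y)$, so the claim becomes that $P_t$ and $P_k$ share no common curve, i.e.\ that the resultant $\operatorname{Res}_y\!\big(T_t(y)+\tfrac12+T_t(x),\,T_k(y)+\tfrac12+T_k(x)\big)$ does not vanish identically in $x$. The separated form $P_j=1+2T_j(x)+2T_j(y)$ together with $\deg T_k\ne\deg T_t$ is what should prevent $P_k$ from being a polynomial function of $P_t$: differentiating $M_k$ along $C$ shows non-constancy is equivalent to the non-identity $T_k'(x)/T_t'(x)=T_k'(y)/T_t'(y)$ failing along $C$, which it does because $T_k'/T_t'$ is a non-constant rational function when $k\ne t$. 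Converting this structural obstruction into a clean proof that $P_t$ and $P_k$ have no common component is the crux of the whole argument.
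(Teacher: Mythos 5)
Your minimality argument is correct, and it is in fact a genuinely different and more self-contained route than the paper's. You classify zero-sum configurations of at most four unit vectors directly (a pair must be antipodal, a triple must be an equilateral frame $\{w,w\omega,w\omega^2\}$, a quadruple must split into two antipodal pairs) and conclude that $M_t=0$ with $n\le4$ forces $M_{2t}=0$ or $M_{3t}=0$, hence $\Hst(X)\supsetneq\{t\}$. The paper instead treats $t=1$ by citing Hong's classification of $1$-designs with fewer than five points together with the Delsarte--Goethals--Seidel examples, and reduces general $t$ to $t=1$ via the power map $\xi\mapsto\xi^t$, analyzing by hand the case where distinct points of $X$ collapse to equal $t$-th powers. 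Your route buys independence from the literature and handles all $t$ uniformly; the one point to make explicit is that your $n=3,4$ classifications must hold at the multiset level (since the vectors $e^{it\theta_j}$ may coincide for distinct $\theta_j$), which they do.

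The existence half, however, has a genuine gap, and it is exactly the one you flag: you never prove that for $k\ne t$ the moment $M_k$ is not identically zero on (a component of) the design curve $C=\{M_t=0\}$, and your closing heuristic is insufficient as stated --- non-constancy of the rational function $T_k'/T_t'$ does not preclude $T_k'(x)/T_t'(x)=T_k'(y)/T_t'(y)$ holding identically along $C$, because $C$ is invariant under $x\leftrightarrow y$ and a non-constant $h$ can perfectly well satisfy $h(x)=h(y)$ along a symmetric curve (e.g.\ $h$ symmetric about $c/2$ on the line $x+y=c$); ruling this out is precisely the missing work. The paper sidesteps the two-variable common-component problem entirely by parametrizing one explicit branch of your curve: it takes the angles to be $\arccos(x)/t$ and $\arccos(-x-\tfrac12)/t$ for $x\in(-1,\tfrac12)\setminus\{-\tfrac14\}$, so that $M_t=1+2x+2(-x-\tfrac12)\equiv 0$ holds identically by construction, and every other moment becomes the one-variable real-analytic function $f_{k/t}(x)=1+2\cos\bigl((k/t)\arccos x\bigr)+2\cos\bigl((k/t)\arccos(-x-\tfrac12)\bigr)$; the identity theorem then makes each zero set $Z_{k/t}$ countable, and a countable union of countable sets cannot cover the interval, giving uncountably many good $x$ at once. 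With this branch choice, your missing lemma shrinks to the check that $f_r\not\equiv 0$ for rational $r\ne1$ (a step the paper itself leaves implicit when invoking its Lemma 2.1), which is easy in one variable: for non-integer $r$ the term $2\cos(r\arccos x)$ contributes a $\sqrt{1+x}$-type singularity at $x=-1$ via $\sin(r\pi)\ne0$, while for integer $m\ne1$ the function $1+2T_m(x)+2T_m(-x-\tfrac12)$ is a polynomial of degree $m$ ($m$ even) or $m-1$ ($m$ odd), hence nonzero. So your outline agrees with the paper's, but as written the crucial non-degeneracy remains open; restricting to the explicit one-variable branch (or actually carrying out the resultant computation you sketch) is what is needed to close it.
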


Using Theorem~A and a product construction on \(S^1\), we then prove the
existence result for arbitrary finite \(T\).

\begin{thmB}\label{thm:B}
For every finite set \(T\subset\NN\), there exists a finite set \(X\subset S^1\)
such that \(\Hst(X)=T\) and \(|X|=5^{|T|}\).
\end{thmB}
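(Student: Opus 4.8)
The plan is to reduce Theorem~B to the single-degree case furnished by Theorem~A by means of a multiplicative structure on point configurations. First I would reformulate harmonic strength analytically. Identifying $S^1$ with the unit circle in $\CC$ and writing $\xi=e^{i\theta}$, the space $\Harm_k(2)$ is spanned by $\Re(\xi^k)=\cos k\theta$ and $\Im(\xi^k)=\sin k\theta$. Hence, introducing the complex moment $M_k(X)\coloneqq\sum_{\xi\in X}\xi^k$, one has $k\in\Hst(X)$ if and only if $M_k(X)=0$, so that $\Hst(X)=\{k\ge1 : M_k(X)=0\}$. The task thus becomes: realise an arbitrary prescribed finite zero-pattern of the sequence $(M_k)_{k\ge1}$.

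The crucial observation is that the group multiplication on $S^1$ converts the \emph{intersection} of individual zero-sets into their \emph{union}. Given finite $X_1,X_2\subset S^1$, form the product multiset $X_1\cdot X_2\coloneqq\{\xi\eta : \xi\in X_1,\ \eta\in X_2\}$. Then $M_k(X_1\cdot X_2)=\sum_{\xi\in X_1}\sum_{\eta\in X_2}(\xi\eta)^k=M_k(X_1)\,M_k(X_2)$, so $M_k(X_1\cdot X_2)=0$ exactly when $M_k(X_1)=0$ or $M_k(X_2)=0$; that is, $\Hst(X_1\cdot X_2)=\Hst(X_1)\cup\Hst(X_2)$. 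Writing $T=\{t_1,\dots,t_n\}$ and choosing, via Theorem~A, a configuration $X_i$ with $\Hst(X_i)=\{t_i\}$, the iterated product $X=X_1\cdots X_n$ satisfies $M_k(X)=\prod_{i=1}^{n}M_k(X_i)$, which vanishes for $k\ge1$ precisely when $k=t_i$ for some $i$; hence $\Hst(X)=T$. The degenerate case $T=\emptyset$ is covered by a single point $X=\{1\}$, all of whose moments equal $1$.

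The one point that needs care—and the main obstacle—is that the designs of the paper are genuine sets, whereas $X_1\cdots X_n$ is a priori a multiset of $\prod_i|X_i|$ points; the moment identity $M_k(X)=\prod_i M_k(X_i)$ holds for the multiset, so I must ensure that the $\prod_i|X_i|$ products $\xi_1\cdots\xi_n$ are pairwise distinct, so that the underlying set carries the same moments. I would secure this genericity using the uncountable supply of designs provided by Theorem~A. A coincidence $\xi_1\cdots\xi_n=\xi_1'\cdots\xi_n'$ between two distinct index-tuples reduces, after cancelling common factors, to a relation $\prod_{i\in S}(\xi_i/\xi_i')=1$ over a nonempty set $S$ of coordinates in which the tuples differ. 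For a single factor this would force two distinct points of one $X_i$ to coincide, which is impossible; for $|S|\ge2$ it couples several factors, and fixing all but one reduces it to a single nontrivial constraint on the shape of the remaining factor. Because the normalisation $1\in X_i$ destroys all rotational freedom, the uncountably many designs of Theorem~A represent uncountably many distinct shapes, so the relevant point-ratios are non-constant across the family and each coincidence cuts out only a nowhere-dense subset of parameter space. Avoiding the finitely many coincidence constraints simultaneously yields configurations whose product is multiplicity-free, completing the construction of a genuine set $X\subset S^1$ with $\Hst(X)=T$.
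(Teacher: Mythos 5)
Your overall route is essentially the paper's: reformulate $\Hst$ through the complex moments $P_k$ (Hong's lemma, which the paper also invokes), observe $P_k(X_1\cdot X_2)=P_k(X_1)P_k(X_2)$ so that products take unions of harmonic strengths (exactly Lemma~\ref{lem:prod}), take one factor per element of $T$ from Theorem~A, and use the uncountable supply of such designs to force the $5^{|T|}$ pairwise products to be distinct. The only structural difference is that the paper chooses factors one at a time, by induction on $|T|$: having fixed $X_k$ with $\Hst(X_k)=T'\setminus\{t\}$, it need only avoid a bad set $D_k$ in the \emph{single} parameter $x$ of the new factor $X^{1/t}(x)$, and it proves $D_k$ finite because each $g\overline{g'}-\xi$ (with $g,g'$ among the five algebraic coordinate functions and $\xi\in X_k\cdot X_k^{-1}\setminus\{1\}$) has finitely many zeros. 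That greedy scheme is cleaner than your simultaneous genericity over the full $n$-tuple of parameters, but both schemes can be made to work. Your explicit treatment of $T=\emptyset$ via $X=\{1\}$ is a small point the paper's induction, which starts at $|T|=1$, leaves implicit.

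The one step in your write-up that does not hold up as stated is the nonconstancy claim: ``the normalisation $1\in X_i$ destroys all rotational freedom, so uncountably many distinct shapes give non-constant point-ratios'' is a non sequitur if Theorem~A is used as a black box. Uncountably many pairwise distinct normalised $5$-point sets could still all share one fixed labelled ratio $\xi_i/\xi_i'$ (imagine a family in which two designated points are always $1$ and a fixed $\zeta\ne1$); then the corresponding coincidence constraint would annihilate the entire family rather than cut out a nowhere-dense subset, and your avoidance argument would collapse for that pattern. What you actually need---and what the paper supplies via Lemma~\ref{lem:singleton2}---is the explicit one-parameter analytic family $X^{1/t}(x)$, for which every ratio $g_j(x)\overline{g_{j'}(x)}$ with $j\ne j'$ is a nonconstant analytic (indeed algebraic) function of $x$, so that, after fixing the other coordinates, each coincidence equation has only finitely (at worst countably) many solutions; one then also intersects with the merely co-countable set of parameters where $\Hst(X^{1/t}(x))=\{t\}$, which a measure or Baire-category count handles. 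With that substitution your simultaneous-genericity argument becomes a correct variant of the paper's proof; without it, the distinctness step is unproved.
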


We also initiate the associated minimum-size problem.  For a nonempty finite set
\(T\subset\NN\), define
\[
  N(T,2):=
  \min\bigl\{|X| \,\bigm|\, X\subset S^1,\ \Hst(X)=T\bigr\}.
\]
In Section~3, we prove \(N(\{t\},2)=5\) for every \(t\ge1\), determine
\(N(\{2,3\},2)=5\), and show that the optimal \(\{2,3\}\)-example is unique up
to rotation.  We also exhibit a seven-point set \(X_0\subset S^1\) with
\[
  \{2,3,4,10\}\subset\Hst(X_0),
\]
formulate the conjecture \(\Hst(X_0)=\{2,3,4,10\}\), and prove a corresponding
conditional uniqueness result.

Finally, we identify \(S^1\) with \(\{z\in\CC\mid |z|=1\}\) and write
\[
  P_k(X):=\sum_{x\in X}x^k
\]
for the \(k\)-th complex moment of a finite set \(X\subset S^1\).  By
\cite[Lemma~1]{Hong82}, for \(X\subset S^1\) one has
\[
  \Hst(X)=\{k\in\NN\mid P_k(X)=0\}.
\]
\section{Construction of Designs with Prescribed Harmonic Strength}
\label{sec:hst_T}

In this section, we prove Theorem A and Theorem B.
First, we prove some lemmas. The following lemma is well-known as the Identity Theorem (see ~\cite[Corollary 1.2.7]{KP02}).

\begin{lem}\label{lem:zeroP}
  Let $I\subset\RR$ be an open interval, and let $f:I\to\RR$ be a real-analytic function. If its zero set
  \[Z(f)=\{x\in I\mid f(x)=0\}\]
  has an accumulation point in $I$, then $f$ is identically zero on $I$. In particular, nontrivial real-analytic functions have only countably many zeros.
\end{lem}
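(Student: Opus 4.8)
The plan is to establish the conclusion locally near an accumulation point and then propagate it across $I$ using connectedness. First I would fix an accumulation point $x_0\in I$ of $Z(f)$ together with a sequence of zeros $x_n\to x_0$ with $x_n\neq x_0$. Continuity of $f$ gives $f(x_0)=0$. Expanding $f$ in its convergent power series $f(x)=\sum_{k\ge 0}a_k(x-x_0)^k$ on some interval $(x_0-r,x_0+r)\subset I$, I claim that every coefficient $a_k$ vanishes. Indeed, if not, let $m$ be the least index with $a_m\neq 0$ and factor $f(x)=(x-x_0)^m g(x)$, where $g(x)=\sum_{j\ge 0}a_{m+j}(x-x_0)^j$ is analytic near $x_0$ with $g(x_0)=a_m\neq 0$. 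By continuity $g$ is nonvanishing on a smaller neighbourhood of $x_0$, so $f(x)\neq 0$ for all $x$ with $0<|x-x_0|<\varepsilon$, contradicting $x_n\to x_0$. Hence $f\equiv 0$ on $(x_0-r,x_0+r)$.

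For the global step, I would let $U$ be the set of points of $I$ admitting a neighbourhood on which $f$ vanishes identically. By construction $U$ is open, and the local argument shows $U\neq\varnothing$. It is also closed in $I$: if $y\in I$ lies in the closure of $U$, then every neighbourhood of $y$ meets $U$ and hence contains zeros of $f$, so $y$ is itself an accumulation point of $Z(f)$; applying the local argument at $y$ gives $f\equiv 0$ near $y$, i.e.\ $y\in U$. Since $I$ is connected and $U$ is a nonempty open and closed subset, $U=I$, and therefore $f\equiv 0$ on $I$.

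For the final assertion I would argue contrapositively. If $f$ is not identically zero, the main statement shows $Z(f)$ has no accumulation point in $I$. Exhausting $I$ by an increasing sequence of compact intervals $K_n\subset I$ with $\bigcup_n K_n=I$, each $K_n$ contains only finitely many zeros: an infinite subset of the compact set $K_n$ would, by Bolzano--Weierstrass, have an accumulation point lying in $K_n\subset I$, which is impossible. Thus $Z(f)=\bigcup_n\bigl(Z(f)\cap K_n\bigr)$ is a countable union of finite sets, hence countable. The one genuinely delicate point is the closedness of $U$ in the global step; this is precisely where real-analyticity, rather than mere smoothness, is indispensable, since for $C^\infty$ functions the vanishing of all derivatives at a point does not force local vanishing.
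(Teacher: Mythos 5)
Your proof is correct, but it takes a more self-contained route than the paper. For the first assertion the paper offers no argument at all: it simply cites the Identity Theorem for real-analytic functions (\cite[Corollary 1.2.7]{KP02}) and then proves only the countability claim, by essentially your argument --- zeros of a nontrivial real-analytic function are isolated, $I$ is a countable union of compact sets, and each compact set can contain only finitely many isolated zeros by Bolzano--Weierstrass. You instead prove the Identity Theorem from scratch by the standard two-step scheme: locally, at an accumulation point $x_0$ of $Z(f)$, if some Taylor coefficient were nonzero you factor $f(x)=(x-x_0)^m g(x)$ with $g(x_0)\neq 0$ and contradict the accumulation of zeros, so $f$ vanishes on a neighbourhood of $x_0$; globally, the set $U$ of points near which $f\equiv 0$ is nonempty, open, and --- by re-running the local argument at limit points --- closed in $I$, so connectedness gives $U=I$. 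Both steps are sound; in particular your closedness step correctly observes that a neighbourhood of $y$ meeting the open set $U$ contains a whole interval of zeros, hence zeros distinct from $y$, so $y$ is genuinely an accumulation point of $Z(f)$. What your route buys is independence from the reference; what the paper's buys is brevity for a classical fact. One small correction to your closing remark: real-analyticity is already indispensable in the \emph{local} step, not only in the closedness of $U$. For a $C^\infty$ function, accumulating zeros do force all derivatives at $x_0$ to vanish (by repeated applications of Rolle's theorem), but the flat function $e^{-1/x^2}$, extended by $0$, shows that this does not imply local vanishing; the failure of closedness you point to is this local failure resurfacing, not an independent obstruction.
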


\begin{proof}
The first statement is the classical Identity Theorem for real-analytic functions. ~\cite[Corollary 1.2.7]{KP02}.

The second statement follows from this isolation property. Indeed, since $I$ is a countable union of compact subsets, $f$ has only countably many zeros.
\end{proof}

\begin{lem}\label{lem:singleton}
    Let $D$ be any open interval satisfying
    \[\emptyset\neq D\subseteq\Bigl(-1,\tfrac12\Bigr)\setminus\{-\tfrac14\},\]
  and for each $r\in\mathbb{Q}_{>0}$ define
  \[f_r(x)=1+2\cos\bigl(r\arccos(x)\bigr)+2\cos\bigl(r\arccos(-x-\tfrac12)\bigr).
  \]
  Then
  \[\{x\in D\mid \forall r\in\mathbb{Q}_{>0}\setminus\{1\}:f_r(x)\neq0\}\neq\emptyset.
  \]
  Moreover, this set is uncountable.
\end{lem}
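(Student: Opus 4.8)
The plan is to show that for each $r\in\mathbb{Q}_{>0}\setminus\{1\}$ the function $f_r$ is not identically zero on $D$, and then conclude by a countability argument. Each $f_r$ is real-analytic on the open interval $\bigl(-1,\tfrac12\bigr)\supseteq D$, since $\arccos$ is real-analytic on $(-1,1)$ and $x\in\bigl(-1,\tfrac12\bigr)$ forces both $x$ and $-x-\tfrac12$ to lie in $(-1,1)$. Granting the non-vanishing claim, \Cref{lem:zeroP} shows that $Z(f_r)\cap D$ is countable for every such $r$: if it had an accumulation point in $D$, then $f_r$ would vanish identically. As $\mathbb{Q}_{>0}\setminus\{1\}$ is countable, the exceptional set $\bigcup_{r}\bigl(Z(f_r)\cap D\bigr)$ is a countable union of countable sets, hence countable, and the set in the statement is exactly $D$ minus this countable set. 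Since $D$ is an uncountable interval, the conclusion — nonempty, indeed uncountable — follows at once.

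It remains to prove $f_r\not\equiv0$ for $r\ne1$. First, $r=1$ is genuinely exceptional: $f_1(x)=1+2x+2(-x-\tfrac12)\equiv0$. I would argue by contradiction: if $f_r$ vanished on a set with an accumulation point in $D$, then by \Cref{lem:zeroP} it would vanish on all of $\bigl(-1,\tfrac12\bigr)$, and I would analyse the behaviour as $x\to\bigl(\tfrac12\bigr)^-$. Splitting $f_r=A+B$ with $A(x)=1+2\cos(r\arccos x)$ and $B(x)=2\cos\!\bigl(r\arccos(-x-\tfrac12)\bigr)$, the summand $A$ extends real-analytically across $x=\tfrac12$, whereas $B$ does not, because $-x-\tfrac12\to-1$, an endpoint where $\arccos$ carries a square-root branch point.

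For \emph{non-integer} rational $r$ this branch point is decisive. Setting $\delta=\tfrac12-x$ and using $\arccos(-1+\delta)=\pi-\sqrt{2\delta}+O(\delta^{3/2})$, a short expansion yields
\[
  B(x)=2\cos(r\pi)+2\sqrt2\,r\sin(r\pi)\,\sqrt{\delta}+O(\delta).
\]
Thus $f_r$ carries a $\sqrt{\tfrac12-x}$ term with coefficient $2\sqrt2\,r\sin(r\pi)$, while the analytic summand $A$ contributes only integer powers of $\delta$. Hence $f_r\equiv0$ would force $\sin(r\pi)=0$, i.e.\ $r\in\mathbb{Z}$, which eliminates every non-integer $r$ simultaneously.

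For \emph{integer} $r=n\ge2$ the branch point disappears and $f_n$ is an honest polynomial (with $\cos(n\arccos x)=T_n(x)$ the Chebyshev polynomial), so a different argument is needed; I expect this to be the main obstacle, since a naive leading-term comparison fails for odd $n$. Using the parity of $T_n$ I would split into two cases. For even $n$, $T_n(x)$ and $T_n(-x-\tfrac12)=T_n(x+\tfrac12)$ share the same leading coefficient, so $f_n$ has degree exactly $n$ and is nonzero. For odd $n$ the leading terms cancel; here $T_n$ is odd, whence $f_n(x)=1+2\bigl(T_n(x)-T_n(x+\tfrac12)\bigr)$, and the difference of a degree-$n$ polynomial and its half-shift has degree exactly $n-1$, hence is nonconstant for $n\ge3$. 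In either case $f_n\not\equiv0$, which completes the non-vanishing claim and with it the lemma.
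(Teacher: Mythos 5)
Your proof is correct, and it is in fact \emph{more} complete than the paper's own. The paper's proof consists essentially of your first paragraph alone: analyticity of each $f_r$ on $D$, countability of each zero set via \Cref{lem:zeroP}, and the countable-union argument over $r\in\mathbb{Q}_{>0}\setminus\{1\}$, concluding since $D$ is uncountable. What the paper never verifies is the hypothesis that makes \Cref{lem:zeroP} give countability, namely that $f_r\not\equiv0$ for every $r\neq1$ --- a point that is not vacuous, since $f_1(x)=1+2x+2(-x-\tfrac12)\equiv0$ shows the family genuinely contains an identically vanishing member. Your second and third paragraphs supply exactly this missing step, and both prongs check out. For non-integer $r$, the expansion at $x\to(\tfrac12)^-$ is right: with $\delta=\tfrac12-x$ one has $\arccos(-1+\delta)=\pi-\sqrt{2\delta}+O(\delta^{3/2})$, so $B$ carries the half-power term $2\sqrt2\,r\sin(r\pi)\sqrt{\delta}$, which the summand $A$ (analytic across $x=\tfrac12$, hence contributing only integer powers of $\delta$) cannot cancel; thus $f_r\equiv0$ on $\bigl(-1,\tfrac12\bigr)$ forces $\sin(r\pi)=0$, i.e.\ $r\in\mathbb{Z}$. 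For integers $n\ge2$ the Chebyshev computation is also correct: even $n$ gives leading coefficient $2\cdot2^{n-1}+2\cdot2^{n-1}=2^{n+1}\neq0$ in degree $n$, while for odd $n$ the shift identity $p(x)-p(x+c)$ having $x^{n-1}$-coefficient $-nca_n\neq0$ shows $f_n=1+2\bigl(T_n(x)-T_n(x+\tfrac12)\bigr)$ has degree exactly $n-1\ge2$. Two tiny polish points: when you read off the $\sqrt{\delta}$-coefficient you are implicitly invoking uniqueness of expansions in powers of $\delta^{1/2}$ (writing $B(\tfrac12-\delta)=C(\delta)+\sqrt{\delta}\,S(\delta)$ with $C,S$ analytic, vanishing of $f_r$ forces $S\equiv0$, hence $S(0)=2\sqrt2\,r\sin(r\pi)=0$); and one should note that since $f_r$ is analytic on all of $\bigl(-1,\tfrac12\bigr)$, an accumulation point of zeros in $D$ does propagate vanishing to the whole interval, so the boundary analysis at $\tfrac12$ applies even when $D$ is far from $\tfrac12$ --- which you do say, and which is essential. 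In sum: same countability skeleton as the paper, plus a genuine repair of the paper's implicit nontriviality claim.
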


\begin{proof}
Let \(I:=(-1,\tfrac12)\). Since \(\arccos(x)\) and
\(\arccos(-x-\tfrac12)\) are analytic on \(I\), each \(f_r\) is analytic
on \(D\).

We first note that \(f_r\) is not identically zero on \(D\) for
\(r\neq1\). Indeed, suppose that \(f_r\equiv0\) on \(D\). Then, by
Lemma~\ref{lem:zeroP}, \(f_r\equiv0\) on \(I\). Put
\(c=-\tfrac14\) and \(h(x)=\cos(r\arccos x)\). Since
\(-c-\tfrac12=c\), the equalities \(f_r(c)=0\) and \(f_r''(c)=0\) give
\(h(c)=-\tfrac14\) and \(h''(c)=0\). Also \(h\) satisfies
\[
(1-x^2)h''(x)-xh'(x)+r^2h(x)=0.
\]
Substituting \(x=c\), we get \(h'(c)=r^2\). On the other hand,
\[
h'(c)^2
=
\frac{r^2(1-h(c)^2)}{1-c^2}
=
r^2.
\]
Hence \(r^4=r^2\), and since \(r>0\), we obtain \(r=1\), a contradiction.

For \(r\neq1\), let \(Z_r\) be the set of zero points of \(f_r\) that
are also in \(D\). Then, from Lemma~\ref{lem:zeroP}, \(Z_r\) is countable.
Therefore,
\[
Z=\bigcup_{r\in\mathbb{Q}_{>0}\setminus\{1\}} Z_r
\]
is also countable, while \(D\) is uncountable. Thus \(D\setminus Z\) is
also uncountable, yielding uncountably many \(x\in D\) such that
\(f_r(x)\neq0\) for all \(r\in\mathbb{Q}_{>0}\setminus\{1\}\).
\end{proof}
 

\begin{lem}\label{lem:singleton2}
  Let $D$ be any open interval satisfying
    \[\emptyset\neq D\subseteq\Bigl(-1,\tfrac12\Bigr)\setminus\{-\tfrac14\}.\]
  For any $x\in D$, define
\[
    X^{\frac{1}{t}}(x)\coloneqq 
\left\{1,e^{{i\arccos(x)}/{t}},e^{-{i\arccos(x)}/{t}},
e^{i\arccos\left(-x-\tfrac12\right)/t},
e^{-i\arccos\left(-x-\tfrac12\right)/t}
\right\},
\]
where $t$ is any natural number.
  Then, there exist uncountably many $x\in D$ such that $\Hst(X^{\frac{1}{t}}(x))=\{t\}$.
\end{lem}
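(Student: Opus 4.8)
The plan is to reduce everything to the complex moments $P_k$ and then invoke \Cref{lem:singleton}. Recall the identity $\Hst(X)=\{k\in\NN\mid P_k(X)=0\}$ for $X\subset S^1$ quoted in the introduction, so it suffices to control $P_k(X^{\frac1t}(x))$ for all $k$. Writing $\alpha=\arccos(x)$ and $\beta=\arccos(-x-\tfrac12)$, the five listed points are $1,e^{\pm i\alpha/t},e^{\pm i\beta/t}$; these are genuinely distinct because $x\neq-\tfrac14$ forces $\alpha\neq\beta$, while $x,-x-\tfrac12\in(-1,1)$ puts $\alpha,\beta\in(\tfrac\pi3,\pi)$ and hence $\alpha/t,\beta/t\in(0,\pi)$. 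A direct summation of the $k$-th powers then gives
\[
P_k(X^{\frac1t}(x))=1+2\cos\!\Big(\tfrac{k}{t}\alpha\Big)+2\cos\!\Big(\tfrac{k}{t}\beta\Big).
\]

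First I would dispose of the degree $k=t$. There the cosines collapse via $\cos(\arccos y)=y$, giving
\[
P_t(X^{\frac1t}(x))=1+2x+2\big(-x-\tfrac12\big)=0,
\]
valid for every $x\in D$. Hence $t\in\Hst(X^{\frac1t}(x))$ unconditionally; this cancellation is precisely the design condition engineered into the construction, and the offset $-\tfrac12$ in $\beta$ is chosen exactly to make it work.

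Next, for $k\in\NN$ with $k\neq t$ (necessarily $k\ge1$, so that $r=k/t\in\mathbb{Q}_{>0}\setminus\{1\}$; if one includes $0\in\NN$, then $P_0=5\neq0$ trivially), comparing the displayed formula with the definition of $f_r$ in \Cref{lem:singleton} yields the key identity $P_k(X^{\frac1t}(x))=f_{k/t}(x)$. Thus $k\notin\Hst(X^{\frac1t}(x))$ is equivalent to $f_{k/t}(x)\neq0$, and demanding this for every $k\neq t$ is implied by the stronger requirement that $f_r(x)\neq0$ for all $r\in\mathbb{Q}_{>0}\setminus\{1\}$, since $\{k/t:k\in\NN\setminus\{t\}\}$ is merely a subset of $\mathbb{Q}_{>0}\setminus\{1\}$. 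By \Cref{lem:singleton} the set of such $x$ in $D$ is uncountable, and for each of them $P_k(X^{\frac1t}(x))\neq0$ for all $k\neq t$ while $P_t(X^{\frac1t}(x))=0$, whence $\Hst(X^{\frac1t}(x))=\{t\}$.

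I expect no serious obstacle to remain: the analytic core — that each $f_r$ has only countably many zeros, so that the countable union of their zero sets cannot exhaust the uncountable $D$ — has already been handled in \Cref{lem:singleton} through the Identity Theorem. The only points demanding care here are bookkeeping, namely verifying the moment formula, checking the exact cancellation at $k=t$, and observing that the degrees $k/t$ that must be excluded all lie in $\mathbb{Q}_{>0}\setminus\{1\}$, so that \Cref{lem:singleton} applies with room to spare.
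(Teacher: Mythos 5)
Your proposal is correct and follows essentially the same route as the paper: both reduce to the moment identity $P_k(X^{\frac1t}(x))=f_{k/t}(x)$, note the unconditional cancellation at $k=t$ (the paper phrases it as $P_1(X(x))=f_1(x)=0$), and invoke \Cref{lem:singleton} to exclude all $r=k/t\in\mathbb{Q}_{>0}\setminus\{1\}$ simultaneously for uncountably many $x$. Your verification that the five points are distinct via $\alpha\neq\beta$ and $\alpha/t,\beta/t\in(0,\pi)$ is in fact slightly more detailed than the paper's one-line assertion.
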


\begin{proof}
      Since $x\notin\{-1,-\tfrac14,\tfrac12\}$, $X^{\tfrac1t}(x)$ contains $5$ distinct elements.
      Write $X(x)=X^1(x)$.
  Since
  \( P_l(X(x))=f_l(x), \)
  we have $P_1(X(x))=0$ for any $x \in D$. 
  From \Cref{lem:singleton}, there exist uncountably many $x \in D$ such that
$f_r(x)\neq0$ for any $r\in\mathbb{Q}_{>0}\setminus\{1\}$.
For these $x$, since $P_k(X^{\frac{1}{t}}(x))=f_{\frac{k}{t}}(x)$, $P_k(X^{\tfrac1t}(x))=0$ if and only if $k=t$. 
Then, the harmonic strength of $X^{\frac{1}{t}}(x)$ is $\{t\}$.
\end{proof}

Now, we can prove Theorem A.

\begin{proof}[Proof of Theorem A]
  From \Cref{lem:singleton2}, there exist uncountably many \(5\)-point sets \(X\subset S^1\) with \(\Hst(X)=\{t\}\) and \(1\in X\).
  
 Next, we show that no subset with fewer points has harmonic strength \(\{t\}\). Assume that \(X\subset S^1\) satisfies \(\Hst(X)=\{t\}\) and \(|X|\le4\). Let
\[
Y:=\{\xi^t\mid \xi\in X\}
\]
be counted as a multiset. Then
\[
P_1(Y)=P_t(X)=0.
\]
Thus \(Y\) is a zero-sum multiset of at most four points on \(S^1\).
Such a multiset is either an antipodal pair, a regular triangle, or a
union of two antipodal pairs, with multiplicities allowed. Hence
\(P_k(Y)=0\) for infinitely many \(k\). Since \(P_k(Y)=P_{tk}(X)\), the
set \(\Hst(X)\) is infinite. This contradicts \(\Hst(X)=\{t\}\).
Therefore \(|X|\ge5\).
\end{proof}
  
\begin{rem}
  The $X(x)$ constructed in \Cref{lem:singleton2} has $5$ points. It is known that the minimum number of points for a spherical $t$-design on $S^1$ is $t+1$, and for $t=1$ this minimum is two. 
  Our construction gives a non-group type $1$-design with $n=5$, matching the lower bound $n\ge2t+3$ in Hong's result (Theorem A of \cite{Hong82}).
\end{rem}

Next, we prove Theorem B. 
To achieve this, we define a combination of spherical designs on $S^1$.
First, we show the following lemma.
Note that since we identify \(S^1\) with a multiplicative subgroup of $\mathbb{C}$, for any subsets $X_1, X_2 \subset S^1$, their product $X_1 \cdot X_2$ is defined as follows:
\[
X_1 \cdot X_2 = \{cd \mid c \in X_1,\, d \in X_2\}.
\]

\begin{lem}
\label{lem:prod}
If $X_1,X_2$ have harmonic strengths $T_1,T_2$ respectively, and $|X_1\cdot X_2|=|X_1||X_2|$, then $X_1\cdot X_2$ has harmonic strength $T_1\cup T_2$.
\end{lem}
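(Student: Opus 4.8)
The plan is to reduce everything to the multiplicativity of the complex moments $P_k$ under the set product. Recall from the remark following the statement of Theorem B that for a finite set $X\subset S^1$ one has $\Hst(X)=\{k\in\NN\mid P_k(X)=0\}$, so it suffices to determine for which $k$ the moment $P_k(X_1\cdot X_2)$ vanishes, and to compare this with the condition $k\in T_1\cup T_2$.

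First I would exploit the hypothesis $|X_1\cdot X_2|=|X_1||X_2|$. This says precisely that the multiplication map $(c,d)\mapsto cd$ from $X_1\times X_2$ into $S^1$ is injective, so every element of the product set $X_1\cdot X_2$ arises from exactly one pair $(c,d)\in X_1\times X_2$. Consequently, summing any function over the product set coincides with summing over all pairs, and for every $k$ one obtains
\[
  P_k(X_1\cdot X_2)=\sum_{c\in X_1}\sum_{d\in X_2}(cd)^k
  =\left(\sum_{c\in X_1}c^k\right)\!\left(\sum_{d\in X_2}d^k\right)
  =P_k(X_1)\,P_k(X_2),
\]
where the middle equality uses the factorisation $(cd)^k=c^kd^k$ valid in the abelian group $\CC^{\times}$.

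With this identity in hand the conclusion is immediate: since $\CC$ has no zero divisors, $P_k(X_1\cdot X_2)=0$ if and only if $P_k(X_1)=0$ or $P_k(X_2)=0$, that is, if and only if $k\in T_1$ or $k\in T_2$. Hence $\Hst(X_1\cdot X_2)=T_1\cup T_2$, as claimed.

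The only point requiring genuine care — and the sole place where the hypothesis enters — is the passage from the sum over the product \emph{set} to the double sum over \emph{pairs}: without the distinctness assumption $|X_1\cdot X_2|=|X_1||X_2|$, the product set would identify distinct pairs into a single point, and the clean factorisation $P_k(X_1\cdot X_2)=P_k(X_1)P_k(X_2)$ would fail. Everything else is a one-line algebraic consequence, so I anticipate no substantial obstacle beyond stating the injectivity reformulation carefully.
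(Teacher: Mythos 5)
Your proof is correct and follows essentially the same route as the paper's: both establish the multiplicativity $P_k(X_1\cdot X_2)=P_k(X_1)P_k(X_2)$ and conclude via the vanishing characterisation of $\Hst$. You simply make explicit two points the paper leaves implicit — that the cardinality hypothesis is exactly the injectivity needed to pass from the sum over the product set to the double sum over pairs, and that $\CC$ having no zero divisors gives the ``iff'' — which is a welcome clarification but not a different argument.
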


\begin{proof}
  Direct calculation gives:
  \begin{align*}
   P_k(X_1\cdot X_2)&=P_k(X_1)P_k(X_2).
  \end{align*} 
  Therefore, $P_k(X_1\cdot X_2)=0$ iff $P_k(X_1)=0$ or $P_k(X_2)=0$, yielding the result.
\end{proof}
\begin{proof}[Proof of Theorem B]
We prove that there exists a subset \(X\subset S^1\) such that
\(\Hst(X)=T\) and \(|X|=5^{|T|}\) by induction on the size of \(T\).
If \(T=\emptyset\), take \(X=\{1\}\). Then \(\Hst(X)=\emptyset\).
When \(|T|=1\), from Theorem~A, there exists a subset \(X_1\subset S^1\)
such that \(|X_1|=5\) and \(\Hst(X_1)=T\).

Let \(T'\) be any subset of \(\NN\) with \(|T'|=k+1\geq2\), and let
\(t\) be any element of \(T'\). By the inductive hypothesis, there exists
a subset \(X_k\subset S^1\) such that \(\Hst(X_k)=T'\setminus\{t\}\) and
\(|X_k|=5^k\).

Recall that
\begin{align*}
X(x)
&=\left\{
1,e^{i\arccos(x)},e^{-i\arccos(x)},
e^{i\arccos(-x-\tfrac12)},e^{-i\arccos(-x-\tfrac12)}
\right\} \\
&=\left\{
1,x\pm i\sqrt{1-x^2},
-x-\tfrac12\pm i\sqrt{-x^2-x+\tfrac34}
\right\} \\
&=\{g_1(x),g_2(x),\ldots,g_5(x)\}
\end{align*}
constructed in \Cref{lem:singleton2} for
\(x\in(-1,\tfrac12)\setminus\{-\tfrac14\}\). Define
\[
D_k\coloneqq
\left\{
x\in(-1,\tfrac12)\setminus\{-\tfrac14\}
\,\middle|\,
|X_k\cdot X^{\tfrac1t}(x)|
<
|X_k|\,|X^{\tfrac1t}(x)|
\right\}.
\]

We show that \(D_k\) is finite. Write
\(X^{\tfrac1t}(x)=\{h_1(x),\ldots,h_5(x)\}\), where \(h_i(x)^t=g_i(x)\).
If \(x\in D_k\), then there exist \(a,b\in X_k\) and \(i,j\) with
\(a h_i(x)=b h_j(x)\), not coming from the same pair. Since
\(X^{\tfrac1t}(x)\) has five distinct elements, we must have \(i\neq j\).
Putting \(\xi=b/a\in X_k\cdot X_k^{-1}\), we get
\[
h_i(x)\overline{h_j(x)}=\xi,
\]
and hence
\[
g_i(x)\overline{g_j(x)}=\xi^t.
\]
Therefore
\[
D_k\subseteq
\bigcup_{\substack{1\leq i,j\leq5\\ i\neq j}}
\bigcup_{\xi\in X_k\cdot X_k^{-1}}
\left\{
x\in(-1,\tfrac12)\setminus\{-\tfrac14\}
\,\middle|\,
g_i(x)\overline{g_j(x)}-\xi^t=0
\right\}.
\]
For \(i\neq j\), the function \(g_i\overline{g_j}\) is a nonconstant algebraic function. Hence each zero set in the above union is finite.
Since the union is finite, \(D_k\) is finite.

Choose a nonempty open interval
\[
D\subset (-1,\tfrac12)\setminus\bigl(D_k\cup\{-\tfrac14\}\bigr).
\]
By \Cref{lem:singleton2}, there exists \(x\in D\) such that
\(\Hst(X^{\tfrac1t}(x))=\{t\}\) and \(|X^{\tfrac1t}(x)|=5\).
Since \(x\notin D_k\), we also have
\[
|X_k\cdot X^{\tfrac1t}(x)|
=
|X_k|\,|X^{\tfrac1t}(x)|.
\]
From \Cref{lem:prod},
\[
\Hst\bigl(X_k\cdot X^{\tfrac1t}(x)\bigr)=T'
\]
and
\[
|X_k\cdot X^{\tfrac1t}(x)|=5^{k+1}.
\]
Therefore, for any \(T'\subset\NN\) with \(|T'|=k+1\), there exists a
subset \(X\subset S^1\) such that \(\Hst(X)=T'\) and \(|X|=5^{k+1}\).
Thus, by induction, for any finite subset \(T\) of \(\NN\), there exists
a subset \(X\subset S^1\) such that \(\Hst(X)=T\) and \(|X|=5^{|T|}\).
\end{proof}
\section{The minimum size problem on \(S^1\)}

We now consider the minimum number of points needed to realize a prescribed
finite harmonic strength.  For a nonempty finite set \(T\subset\NN\), define
\[
N(T,2):=\min\bigl\{|X| \,\bigm|\, X\subset S^1,\ \Hst(X)=T\bigr\}.
\]
By Theorems~A and~B,
\[
N(\{t\},2)=5\quad(t\ge1),
\qquad
N(T,2)\le 5^{|T|}.
\]
The following problem is therefore natural.

\begin{prob}
Determine \(N(T,2)\) for finite nonempty \(T\subset\NN\).
\end{prob}

\begin{prop}\label{prop:NT_ge_5}
Let \(T\subset\NN\) be finite and nonempty. Then
\[
N(T,2)\ge5.
\]
\end{prop}

\begin{proof}
Let \(X\subset S^1\) satisfy \(\Hst(X)=T\), and choose \(t\in T\).
Let
\[
Y:=\{\xi^t\mid \xi\in X\}
\]
be counted as a multiset. Then
\[
P_1(Y)=P_t(X)=0.
\]
If \(|X|\le4\), then \(Y\) is a zero-sum multiset of at most four points
on \(S^1\). Such a multiset is either an antipodal pair, a regular triangle,
or a union of two antipodal pairs, with multiplicities allowed. Hence
\(P_k(Y)=0\) for infinitely many \(k\). Since
\[
P_k(Y)=P_{tk}(X),
\]
the set \(\Hst(X)\) is infinite. This contradicts \(\Hst(X)=T\), where
\(T\) is finite. Therefore \(|X|\ge5\).
\end{proof}

\begin{conj}\label{conj:double}
Let \(p\neq q\) be integers with \(p,q>1\). Then
\[
N(\{p,q\},2)=5.
\]
\end{conj}

The first case is as follows.

\begin{prop}\label{prop:23}
\[
N(\{2,3\},2)=5.
\]
\end{prop}

\begin{proof}
By Proposition~\ref{prop:NT_ge_5}, it is enough to construct a five-point
set.  Let
\[
X=\{1,e^{\pm ia},e^{\pm ib}\},
\qquad
x=\cos a,\quad y=\cos b.
\]
Then
\[
P_k(X)=1+2T_k(x)+2T_k(y),
\]
where \(T_k\) is the Chebyshev polynomial.  The equations
\(P_2(X)=P_3(X)=0\) are
\[
x^2+y^2=\frac34,
\qquad
8(x^3+y^3)-6(x+y)+1=0.
\]
Put \(s=x+y\) and \(p=xy\). Then
\[
p=\frac{s^2-\frac34}{2},
\qquad
(s-1)(2s+1)^2=0.
\]
Since \(P_1(X)=1+2s\), we take \(s=1\). Hence \(p=1/8\), and we may take
\[
x=\frac12-\frac{\sqrt2}{4},
\qquad
y=\frac12+\frac{\sqrt2}{4}.
\]
Then \(P_1(X)\neq0\) and \(P_2(X)=P_3(X)=0\).

It remains to show that there are no further vanishing moments. Put
\[
q_k:=P_k(X)-1.
\]
The four numbers \(e^{\pm ia},e^{\pm ib}\) are the roots of
\[
2z^4-4z^3+5z^2-4z+2.
\]
Therefore
\[
2q_{k+4}-4q_{k+3}+5q_{k+2}-4q_{k+1}+2q_k=0
\qquad(k\ge0).
\]
Moreover,
\[
q_0=4,\quad q_1=2,\quad q_2=q_3=-1,\quad q_4=\frac12,\quad q_5=-\frac12.
\]
For \(m\ge1\), define
\[
E_m:=2^{m-1}q_{2m},
\qquad
O_m:=2^{m-1}q_{2m+1}.
\]
Putting \(k=2m\) and \(k=2m+1\) in the recurrence gives
\begin{align*}
E_{m+2}-4O_{m+1}+5E_{m+1}-8O_m+4E_m&=0,\\
O_{m+2}-2E_{m+2}+5O_{m+1}-4E_{m+1}+4O_m&=0.
\end{align*}
Since
\[
E_1=O_1=-1,\qquad E_2=1,\qquad O_2=-1,
\]
these relations show by induction that \(E_m,O_m\in\mathbb Z\) for all
\(m\ge1\). Reducing them modulo \(2\), we obtain
\[
E_{m+2}\equiv E_{m+1},
\qquad
O_{m+2}\equiv O_{m+1}
\pmod 2.
\]
Hence \(E_m\) and \(O_m\) are odd for all \(m\ge2\).
Thus, for \(k\ge4\), the number \(q_k\) is not an integer. In particular
\(q_k\neq-1\). Since \(P_k(X)=0\) is equivalent to \(q_k=-1\), we get
\[
P_k(X)\neq0\qquad(k\ge4).
\]
Hence
\[
\Hst(X)=\{2,3\}.
\]
\end{proof}

\begin{rem}
For a general pair \(\{p,q\}\), solving the equations
\(P_p(X)=P_q(X)=0\) for a five-point set does not by itself prove
\(\Hst(X)=\{p,q\}\).  One must also exclude all further vanishing moments.
\end{rem}

We shall also use the following elementary fact.

\begin{lem}\label{lem:unit-circle-coefficients}
Let \(X=\{\xi_1,\dots,\xi_n\}\subset S^1\), and write
\[
F_X(z):=\prod_{j=1}^n(z-\xi_j)
=z^n-e_1z^{n-1}+e_2z^{n-2}-\cdots+(-1)^ne_n.
\]
Then \(|e_n|=1\), and
\[
e_m=e_n\overline{e_{n-m}}
\qquad(0\le m\le n),
\]
where \(e_0=1\).
\end{lem}

\begin{proof}
Since \(\xi_j^{-1}=\overline{\xi_j}\), we have
\[
e_n\overline{e_{n-m}}
=
\left(\prod_{j=1}^n\xi_j\right)
\sum_{|I|=n-m}\prod_{i\in I}\overline{\xi_i}
=
\sum_{|J|=m}\prod_{j\in J}\xi_j
=
e_m.
\]
Also \(|e_n|=\prod_j|\xi_j|=1\).
\end{proof}

\begin{prop}\label{prop:unique23}
A set \(X\subset S^1\) with \(|X|=N(\{2,3\},2)\) and
\(\Hst(X)=\{2,3\}\) is unique up to rotation.
\end{prop}

\begin{proof}
Let \(X\subset S^1\) be such a set.  Put \(s_k=P_k(X)\).  Then
\[
|X|=5,\qquad s_2=s_3=0,\qquad s_1\neq0.
\]
Write
\[
F_X(z)=z^5-e_1z^4+e_2z^3-e_3z^2+e_4z-e_5.
\]
Newton's identities give
\[
e_1=s_1,\qquad
e_2=\frac{s_1^2}{2},\qquad
e_3=\frac{s_1^3}{6}.
\]
By Lemma~\ref{lem:unit-circle-coefficients},
\[
e_3=e_5\overline{e_2}.
\]
Taking absolute values gives
\[
\left|\frac{s_1^3}{6}\right|
=
\left|\frac{s_1^2}{2}\right|,
\]
so \(|s_1|=3\).  After rotation, we may assume \(s_1=3\).  Then
\[
e_1=3,\qquad e_2=e_3=\frac92.
\]
Again by Lemma~\ref{lem:unit-circle-coefficients}, we get
\[
e_5=1,\qquad e_4=3.
\]
Thus
\[
F_X(z)=z^5-3z^4+\frac92z^3-\frac92z^2+3z-1=\frac{1}{2}(2z^4-4z^3+5z^2-4z+2)(z-1).
\]
Hence \(X\) is determined up to rotation.
\end{proof}

We next consider \(T=\{2,3,4,10\}\).

\begin{prop}\label{prop:23410-lower}
\[
N(\{2,3,4,10\},2)\ge7.
\]
\end{prop}

\begin{proof}
Let \(X\subset S^1\) satisfy \(\Hst(X)=\{2,3,4,10\}\), and put
\(s_k=P_k(X)\).  Then
\[
s_2=s_3=s_4=s_{10}=0,\qquad s_1\neq0.
\]
By Proposition~\ref{prop:NT_ge_5}, it remains to exclude \(|X|=5,6\).

Suppose first that \(|X|=5\).  Write
\[
F_X(z)=z^5-e_1z^4+e_2z^3-e_3z^2+e_4z-e_5.
\]
Newton's identities give
\[
e_1=s_1,\quad
e_2=\frac{s_1^2}{2},\quad
e_3=\frac{s_1^3}{6},\quad
e_4=\frac{s_1^4}{24}.
\]
By Lemma~\ref{lem:unit-circle-coefficients},
\[
|e_3|=|e_2|,
\qquad
|e_4|=|e_1|.
\]
Hence
\[
|s_1|=3,\qquad |s_1|^3=24,
\]
a contradiction.

Suppose next that \(|X|=6\).  Write
\[
F_X(z)=z^6-e_1z^5+e_2z^4-e_3z^3+e_4z^2-e_5z+e_6.
\]
Again,
\[
e_1=s_1,\quad
e_2=\frac{s_1^2}{2},\quad
e_3=\frac{s_1^3}{6},\quad
e_4=\frac{s_1^4}{24}.
\]
Lemma~\ref{lem:unit-circle-coefficients} gives \(|e_4|=|e_2|\), hence
\[
|s_1|^2=12.
\]
After rotation, assume \(s_1=\sqrt{12}\).  Then
\[
e_1=e_5=\sqrt{12},
\qquad
e_2=e_4=6,
\qquad
e_3=2\sqrt{12},
\qquad
e_6=1.
\]
Put \(a=\sqrt{12}\). The recurrence obtained from \(F_X\) is
\[
s_{k+6}-a s_{k+5}+6s_{k+4}-2a s_{k+3}
+6s_{k+2}-a s_{k+1}+s_k=0
\qquad(k\ge0).
\]
Using
\[
s_0=6,\qquad s_1=a,\qquad s_2=s_3=s_4=s_{10}=0,
\]
the cases \(k=0,1,2,3,4\) give
\[
\begin{aligned}
s_6&=a s_5+6, &
s_7&=6s_5+5a,\\
s_8&=2a s_5+24, &
s_9&=6s_5+6a,
\end{aligned}
\qquad
a s_5+12=0.
\]
Thus \(s_5=-a\), and hence
\[
s_8=s_9=0.
\]
This contradicts \(\Hst(X)=\{2,3,4,10\}\).  Therefore \(|X|\ge7\).
\end{proof}

\begin{prop}\label{prop:23410-example}
Let
\[
F_0(z)=
z^7-4z^6+8z^5-\frac{32}{3}z^4
+\frac{32}{3}z^3-8z^2+4z-1,
\]
and let \(X_0\) be the set of zeros of \(F_0\). Then
\[
X_0\subset S^1,\qquad |X_0|=7,
\qquad
\{2,3,4,10\}\subset \Hst(X_0).
\]
\end{prop}

\begin{proof}
We have
\[
F_0(z)=(z-1)
\left(
z^6-3z^5+5z^4-\frac{17}{3}z^3+5z^2-3z+1
\right).
\]
For the second factor, put \(u=z+z^{-1}\).  Then
\[
z^{-3}
\left(
z^6-3z^5+5z^4-\frac{17}{3}z^3+5z^2-3z+1
\right)
=
u^3-3u^2+2u+\frac13.
\]
Thus \(u\) satisfies
\[
3u^3-9u^2+6u+1=0.
\]
This cubic has one root in each of the intervals
\[
(-2,0),\qquad (1,3/2),\qquad (3/2,2).
\]
Hence all its roots lie in \((-2,2)\), and the corresponding six zeros
of the second factor lie on \(S^1\).  The remaining zero is \(1\).
Thus \(X_0\subset S^1\) and \(|X_0|=7\).

Let \(s_k=P_k(X_0)\). Newton's identities applied to \(F_0\) give
\[
s_1=4,\qquad
s_2=s_3=s_4=0,\qquad
s_5=s_6=-\frac83,\qquad
s_7=\frac53.
\]
The recurrence obtained from \(F_0\) then gives
\[
s_8=\frac{32}{9},\qquad
s_9=\frac89,\qquad
s_{10}=0.
\]
Therefore
\[
2,3,4,10\in\Hst(X_0).
\]
\end{proof}

\begin{conj}\label{conj:23410}
For the set \(X_0\) in Proposition~\ref{prop:23410-example},
\[
\Hst(X_0)=\{2,3,4,10\}.
\]
Equivalently,
\[
P_k(X_0)\neq0
\qquad
(k\notin\{2,3,4,10\}).
\]
\end{conj}

\begin{cor}\label{cor:23410-conditional}
Assume Conjecture~\ref{conj:23410}. Then
\[
N(\{2,3,4,10\},2)=7.
\]
\end{cor}

\begin{proof}
The lower bound is Proposition~\ref{prop:23410-lower}.  The upper bound
follows from Proposition~\ref{prop:23410-example} and
Conjecture~\ref{conj:23410}.
\end{proof}

\begin{prop}\label{prop:23410-rigidity}
Let \(X\subset S^1\) be a seven-point set such that
\[
P_2(X)=P_3(X)=P_4(X)=0,
\qquad
P_1(X)\neq0.
\]
Then \(X\) is equal to \(X_0\) up to rotation.
\end{prop}

\begin{proof}
Put \(s_k=P_k(X)\), and write
\[
F_X(z)=z^7-e_1z^6+e_2z^5-e_3z^4+e_4z^3-e_5z^2+e_6z-e_7.
\]
Newton's identities give
\[
e_1=s_1,\quad
e_2=\frac{s_1^2}{2},\quad
e_3=\frac{s_1^3}{6},\quad
e_4=\frac{s_1^4}{24}.
\]
By Lemma~\ref{lem:unit-circle-coefficients},
\[
e_3=e_7\overline{e_4}.
\]
Taking absolute values gives \(|s_1|=4\).  After rotation, assume
\(s_1=4\).  Then
\[
e_1=4,\qquad e_2=8,\qquad e_3=e_4=\frac{32}{3}.
\]
The coefficient relations in Lemma~\ref{lem:unit-circle-coefficients} give
\[
e_7=1,\qquad e_6=4,\qquad e_5=8.
\]
Therefore
\[
F_X(z)=F_0(z).
\]
Hence \(X=X_0\) after rotation.
\end{proof}

\begin{cor}\label{cor:23410-unique-conditional}
Assume Conjecture~\ref{conj:23410}. Then a set \(X\subset S^1\) with
\[
|X|=N(\{2,3,4,10\},2),
\qquad
\Hst(X)=\{2,3,4,10\}
\]
is unique up to rotation.
\end{cor}

\begin{proof}
By Corollary~\ref{cor:23410-conditional}, such a set has seven points.
Hence Proposition~\ref{prop:23410-rigidity} applies.
\end{proof}

\begin{prob}
Decide Conjecture~\ref{conj:23410}.
\end{prob}

\begin{prob}
Determine \(N(\{p,q\},2)\) for distinct integers \(p,q>1\).
\end{prob}

\begin{prob}
Determine for which finite sets \(T\subset\NN\) the minimum-size set
\(X\subset S^1\) with \(\Hst(X)=T\) is unique up to rotation.
\end{prob}

\section*{Acknowledgements}
The authors would like to express their sincere gratitude to their supervisors,
Professor Tsuyoshi Miezaki and Professor Akihiro Munemasa, for their consistent
and kind guidance. Their helpful suggestions and many stimulating discussions
were essential to the completion of this work.

\section*{Statements and Declarations}

\paragraph{Funding}
Ryutaro Misawa was supported by the Japan Society for the Promotion of Science
through the JSPS Research Fellowship for Young Scientists and JSPS KAKENHI Grant Number JP26KJ0570.

\paragraph{Data availability}
Data sharing is not applicable to this article as no datasets were generated or
analysed during the current study.

\paragraph{Competing interests}
The authors have no competing interests to declare that are relevant to the
content of this article.

\paragraph{Author contributions}
Both authors contributed to the conception and development of the results and
to the writing and revision of the manuscript. Both authors read and approved
the final manuscript.


\begin{thebibliography}{99}

\bibitem{DGS77}
Delsarte, P., Goethals, J.M., Seidel, J.J.:
\newblock Spherical codes and designs.
\newblock \emph{Geom. Dedicata} \textbf{6}(3), 363--388 (1977).
\newblock \url{https://doi.org/10.1007/BF03187604}

\bibitem{Hong82}
Hong, Y.:
\newblock On spherical \(t\)-designs in \(\mathbb{R}^{2}\).
\newblock \emph{European J. Combin.} \textbf{3}(3), 255--258 (1982).
\newblock \url{https://doi.org/10.1016/S0195-6698(82)80036-X}

\bibitem{KP02}
Krantz, S.G., Parks, H.R.:
\newblock \emph{A Primer of Real Analytic Functions}, 2nd ed.
\newblock Birkh\"auser Boston, Boston, MA (2002).
\newblock \url{https://doi.org/10.1007/978-0-8176-8134-0}

\bibitem{Martinez22}
Mart\'{\i}nez, F.N.:
\newblock Symmetric functions and spherical \(t\)-designs in \(\mathbb{R}^{2}\).
\newblock \emph{Des. Codes Cryptogr.} \textbf{90}(11), 2563--2581 (2022).
\newblock \url{https://doi.org/10.1007/s10623-021-00922-3}

\bibitem{Miezaki13}
Miezaki, T.:
\newblock On a generalization of spherical designs.
\newblock \emph{Discrete Math.} \textbf{313}(4), 375--380 (2013).
\newblock \url{https://doi.org/10.1016/j.disc.2012.11.012}

\bibitem{ZBBKY17}
Zhu, Y., Bannai, E., Bannai, E., Kim, K.-T., Yu, W.-H.:
\newblock On spherical designs of some harmonic indices.
\newblock \emph{Electron. J. Combin.} \textbf{24}(2), Paper No. P2.14, 28 pp. (2017).
\newblock \url{https://doi.org/10.37236/6437}

\end{thebibliography}
\end{document}